\theoremstyle{plain}
\newtheorem{theorem}{Theorem}
\newtheorem*{lemma}{Lemma}
\theoremstyle{definition}
\theoremstyle{remark}
\DeclareMathOperator\Ai{Ai}
\DeclareMathOperator\Bi{Bi}
\begin{document}

\title[]{Quantitative Projections in the \\Sturm Oscillation Theorem}
\keywords{Sturm Oscillation theorem, Sturm-Liouville theory, Sturm-Hurwitz theorem.}
\subjclass[2010]{34B24, 34C10, 42A05.} 

\author[]{Stefan Steinerberger}
\address{Department of Mathematics, Yale University}
\email{stefan.steinerberger@yale.edu}

\begin{abstract}
There is $c_{} > 0$ such that for all $f \in C[0,\pi]$ with at most $d-1$ roots inside $(0,\pi)$
$$  \sum_{1 \leq n \leq d}{ \left| \left\langle f,  \sin\left( n x\right) \right\rangle \right|} \geq
\kappa^{-\kappa^2 \log{\kappa}}\|f\|_{L^2} \qquad \mbox{where} \quad \kappa = \frac{c_{} \| \nabla f\|_{L^2}}{\|f\|_{L^2}}.$$
This quantifies the Sturm-Hurwitz Theorem and connects a purely topological condition (number of roots) to the Fourier spectrum. It is also one of few estimates on Fourier coefficients from \textit{below}. 
The result holds more generally for eigenfunctions of regular Sturm-Liouville problems
$$ - (p(x) y'(x))' + q(x) y(x) = \lambda w(x) y(x) \qquad \mbox{on}~(a,b).$$
Sturm-Liouville theory shows the existence of a sequence of solutions $(\phi_n)_{n=1}^{\infty}$
that form an orthogonal basis of $L^2(a,b)$ with respect to $w(x)dx$.  Sturm himself proved that if $f:(a,b) \rightarrow \mathbb{R}$ is a finite linear combinations of $\phi_n$
having $d-1$ roots inside $(a,b)$, then $f$ cannot be orthogonal to $A = \mbox{span}\left\{\phi_1, \dots, \phi_{d}\right\}$. We prove
a lower bound on the size of the projection $\| \pi_A f\|_{}$. \end{abstract}

\maketitle

\section{Introduction}
\subsection{Introduction.}
This paper is concerned with the Sturm-Liouville problem 
\begin{align*}
 - (p(x) y'(x))' + q(x) y(x) &= \lambda w(x) y(x) \qquad \mbox{on an interval}~(a,b)\\
p(a)y'(x) - \alpha y(a) &= 0\\
p(b) y'(x) + \beta y(b) &= 0,
\end{align*}
where $p, q, w$ are smooth, $p, w > 0$ are uniformly bounded away from 0, $q \geq 0$ is nonnegative, $\lambda \geq 0$ is the eigenvalue and $\alpha, \beta$ are nonnegative constants.
The cases $\alpha = \infty$ and $\beta = \infty$ are permitted and should be understood as Dirichlet boundary conditions.
 If $p(x) \equiv 1 \equiv w(x)$ and $\alpha = \beta = 0$, we recover the classical eigenvalue problem for the Schr\"odinger operator
$$ H = -\frac{d^2}{dx^2} + q(x)$$
as a special case. The study of these objects, Sturm-Liouville theory, dates back to seminal papers of Sturm and Liouville from 1836  \cite{liouville, sturm, sturm2}.
The main purpose of our paper is to show that the strong form of the Sturm Oscillation Theorem \cite{sturm} can be made quantitative. Sturm proved that there exists a discrete set of parameter $(\lambda_n)_{n=1}^{\infty}$ (the eigenvalues of the Sturm-Liouville
operator) and an associated sequence of solutions $(\phi_n)_{n=1}^{\infty}$ that form an orthogonal basis in $L^2(a,b)$. 
He further established a structure statement for the solutions $\phi_n$.

\begin{quote}
\textbf{(Weak) Sturm Oscillation Theorem.} $\phi_n$ has $n-1$ roots in $(a,b)$.
\end{quote} 
Many other structural properties are known: one that is also commonly found in textbooks is that the roots of consecutive solutions are interlacing. 
 However, both Sturm and Liouville originally proved a \textit{much} stronger result (Sturm being the first to establish the result, Liouville then gave a different proof). That stronger result is not very well known and
reads as follows.

\begin{quote}
\textbf{Sturm Oscillation Theorem.} For any integers $m \leq n$ and any set of coefficients $a_m, a_{m+1}, \dots, a_n$ such that not all of them are 0, the function
$$ \sum_{k=m}^{n}{a_k \phi_k} \qquad \mbox{has at least}~m-1~\mbox{and at most}~n-1~\mbox{roots in}~(a,b).$$
\end{quote} 
The  case $\phi_n(x) = \sin{nx}$ is sometimes known as the Sturm-Hurwitz theorem
after being stated by Hurwitz \cite{hurwitz} (who explicitly mentions Sturm and also deals with the case $n=\infty$). The Sturm-Hurwitz theorem has had substantual impact in partial differential equations, see \S 2.1. 
A recent paper by B\'erard \& Helffer \cite{berard} chronicles the history of the result (and how it was forgotten) and gives an accessible, clear and modern description of the original proofs. An engrossing historical description of the development of Sturm-Liouville theory is given by L\"utzen \cite{lutzen}; we quickly quote verbatim from \cite[\S 2]{lutzen}
since it describes a historic event that should be more widely known.
\begin{quote}
In 1833 both \textsc{Sturm} and \textsc{Liouville} and their common friend \textsc{J. M. C. Duhamel} applied for the seat vacated by the death of \textsc{A. M. Legendre}. A fourth applicant was \textsc{G. Libri-Carucci} [...] On March $18^{th}$, \textsc{Libri} was elected with 37 votes against \textsc{Duhamel's} 16 and \textsc{Liouville} 1. Nobody voted for \textsc{Sturm}. The next opportunity was offered after the
death of \textsc{Ampere} in the summer of 1836. [...] Three weeks before the election [...] \textsc{Liouville} presented a paper to the Academy in which he praised \textsc{Sturm's} two memoires on the Sturm-Liouville theory as ranking with the best works of \textsc{Lagrange}. Supporting a rival in this way was rather unusual in the competitive Parisian academic circles, and it must have been shocking when on the day of the election, December $5^{th}$, \textsc{Liouville} and \textsc{Duhamel} withdrew their candidacies to secure the seat for their friend. \textsc{Sturm} was elected with an overwhelming majority. (L\"utzen, \cite[\S 2]{lutzen})
\end{quote}

\subsection{Our result.}

One particular implication of the (strong) Sturm Oscillation Theorem will be the following: if $f \in C[a,b]$ has at most $d-1$ roots (counted without multiplicity) in $(a,b)$, then the function $f$ cannot be orthogonal to the subspace
$$ A = \mbox{span}\left\{\phi_1, \dots, \phi_d\right\} \subset L^2(a,b).$$
 It becomes natural to ask how large the projection onto that subspace is, i.e. to determine lower bounds on the size of $\pi_{A} f$. This question, while of intrinsic interest, is also naturally
related to problems related to the behavior of partial differential equations, we refer to \cite{gal, stein2}.
It is easy to see that some additional condition is necessary: take $f$ to be a mollification of the function $\delta_{x} - \delta_{x+\varepsilon}$, where $\delta_x$ denotes
the Dirac delta in $x$ and $\varepsilon > 0$ is arbitrary (and $a < x < x+ \varepsilon < b$). This function is highly localized and has mean value zero, there is no control on its spatial scale. In particular, the integral over the product of $f$ and \textit{any} smooth function can be arbitrarily small: the eigenfunctions of the Sturm-Liouville operator are certainly smooth, therefore no uniform bounds on $\| \pi_A f\|_{L^2}$ are possible. However, we also observe that functions $f$ so constructed necessarily have to have $\| \nabla f \|_{L^2}$ large: the main purpose of this paper is to note that once we account for this fact and impose some smoothness on $f$, uniform results are indeed possible.

\subsection{Organization.} \S 2 discusses the main results. We start by describing the main result for the special case $- y''(x) = \lambda y(x)$ with Dirichlet boundary conditions $[0, \pi]$ (which turns out to be a fairly typical case) and demonstrate how 
it implies a refined Sturm-Hurwitz theorem. \S 2.2. discusses a second case involving Airy functions to show that the result truly holds a greater level of generality. \S 2.3. states the main result,
\S 2.4. discusses how this can be applied to some related problems arising in the study of integral operators. \S 3 discusses a curious combinatorial Lemma that plays a subtantial role in the proof 
of the main result. \S 4 gives a proof of the main result; a proof of Theorem 1, the special case $-y''(x) = \lambda y(x)$, follows immediately by replacing the general Sturm-Liouville eigenfunctions $\phi_n(x)$ with $\sin{(nx)}$
(and various steps in the proof simplify). 

\section{Main Results}

\subsection{Trigonometric Functions.}We first, for illustrative purposes, state the main result in two special cases where everything can be made explicit.
The first special case deals with
$$ - y''(x) = \lambda  y(x) \qquad \mbox{on}~(0,\pi) \qquad \mbox{and}~y(0) = y(\pi) = 0.$$
The eigenvalues $(\phi_n)_{n=1}^{\infty}$ are given by $\lambda_n = n^2$ and the eigenfunctions by $\phi_n(x) = \sin{( n x)}$. Sturm's Oscillation Theorem implies that if
$$ f(x) = \sum_{n=1}^{N}{a_n \sin{(n x)}} \quad \mbox{has}~d-1~\mbox{roots in}~(0,\pi)\mbox{, then} \quad \sum_{n=1}^{d}{|a_n|} > 0$$
This result is sometimes called the Sturm-Hurwitz Theorem. The case $N=\infty$ turns out to be admissible but the conclusion has never been strenghtened. However, there has been substantial interest in the statement itself. Polya \cite{pol} emphasizes the connection to the heat equation; indeed, for a diffusion process the number of roots is necessarily nonincreasing and this can be used as a proof of the statement by taking time $t \rightarrow \infty$ (this line of reasoning originates with Sturm and Liouville).  Kolmogorov, Petrovskii \& Piskunov \cite{kpp} rediscovered the principle in their original work on the KPP equation. Tabachnikov \cite{taba}, following Blaschke \cite{blaschke}, observed that it generalizes a famous result in the differential geometry of plane curves, the four vertex theorem of Mukhopadhyaya \cite{muk} and Kneser \cite{kne} (see also Arnol'd \cite{arnold}).
Arnol'd discusses the Sturm-Hurwitz theorem at length \cite{arn2} in his third 1997 Lecture at the Fields institute. The survey of Galaktionov \& Harwin \cite{gal} discusses its importance for parabolic partial differential equations.\\

 There has been great interest in generalizing this principle. Eremenko \& Novikov \cite{eremenko, erem2} established a beautiful continuous Sturm-Hurwitz theorem (functions whose Fourier transform is supported a fixed distance from the origin oscillate at least at a certain rate) that was originally conjectured by Logan \cite{logan}. Generalizations to higher dimensions have proven to be of substantial difficulty: even the number of nodal domains of a single eigenfunction of 
$-\Delta$ remains poorly understood, we refer to the seminal papers of Courant \cite{cour} and Pleijel \cite{plei}, more recent papers are \cite{ber, bour, stein}. The author showed \cite{stein1} that any linear combination of Laplacian eigenfunctions whose eigenvalue exceeds a certain limit $\lambda$ has to vanish somewhat often but the scaling in that result is likely not optimal: a better understanding of the higher-dimensional case is much desired. 

\begin{theorem} Assume $f \in C[0,\pi]$ has $d-1$ roots in $(0,\pi)$. Then, for some universal $c > 0$,
$$  \sum_{n = 1}^{d}{ \left| \left\langle f,  \sin\left( n x\right) \right\rangle \right|} \geq \kappa^{-\kappa^2 \log{\kappa}} \|f\|_{L^2} \qquad \mbox{where} ~\kappa = c_{} \frac{\| \nabla f\|_{L^2([0,\pi])}}{\|f\|_{L^2([0,\pi])}}.$$
\end{theorem}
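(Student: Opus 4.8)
The plan is to normalise $\|f\|_{L^2}=1$, write $K=\|\nabla f\|_{L^2}$ so that $\kappa=cK$, and first dispose of the regime where $\kappa$ is below an absolute constant: there the right-hand side is $\lesssim 1$ and the statement is a soft, non-quantitative form of the Sturm--Hurwitz theorem for functions of bounded energy, which I would isolate as a base case. For large $\kappa$, let $0<x_1<\dots<x_k<\pi$ ($k\le d-1$) be the points where $f$ changes sign and let $\sigma$ be the $\pm1$-valued step function that is constant on each of the $k+1$ intervals cut out by these points and satisfies $f\sigma=|f|$. Then $\langle f,\sigma\rangle=\|f\|_{L^1}$, and the first quantitative step is to show this is not too small: from $\|f\|_{L^2}^2\le\|f\|_{L^\infty}\|f\|_{L^1}$ and the one-dimensional interpolation inequality $\|f\|_{L^\infty}\lesssim\|f\|_{L^2}^{1/2}\|f\|_{H^1}^{1/2}\lesssim(1+\kappa)^{1/2}$ one gets $\|f\|_{L^1}\gtrsim\kappa^{-1/2}$.

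Next I would try to manufacture a \emph{test polynomial} $P=\sum_{n=1}^{d}\beta_n\sin(nx)$ with controlled coefficients and $\langle f,P\rangle\gtrsim\|f\|_{L^1}$. Since $\langle f,P\rangle=\sum_{n=1}^d\beta_n\langle f,\sin(nx)\rangle$, this gives $\sum_{n=1}^{d}|\langle f,\sin(nx)\rangle|\ge\langle f,P\rangle/\max_n|\beta_n|$, so the theorem follows as soon as $\max_n|\beta_n|\le\kappa^{O(\kappa^2\log\kappa)}$. The polynomial $P$ should imitate $\sigma$; writing $\sin(nx)=\sin x\,U_{n-1}(\cos x)$ one has $P=\sin x\cdot Q(\cos x)$ with $\deg Q\le d-1$, so the task becomes: choose a real polynomial $Q$ of degree at most $d-1$ whose sign on $\cos(I_j)$ matches the sign of $f$ on $I_j$ and which, on the intervals carrying the bulk of $\|f\|_{L^1}$, is bounded below in an $L^2(f^2\,dx)$-weighted sense by an absolute constant.

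The remaining work is to identify which intervals carry the bulk and to build $Q$ on that reduced configuration. A Poincar\'e (Wirtinger) inequality on each sign-interval gives $\int_{I_j}f^2\le C|I_j|^2K^2$, so any interval of non-negligible mass has length $\gtrsim\kappa^{-1}$; conversely an interval with $\int_{I_j}f^2<\theta$ adds at most $\sqrt{\pi\theta}$ to $\|f\|_{L^1}$, and a Cauchy--Schwarz count shows that if every sign-interval were this light there would have to be $\gtrsim\kappa^{-1}\theta^{-1}$ of them. Playing these facts against each other — which I expect to be exactly what the combinatorial lemma of \S 3 is for — should let me pass to a sub-collection of $D\lesssim\kappa^{O(1)}$ intervals, each of length $\gtrsim\kappa^{-O(1)}$, on which $\tfrac12\|f\|_{L^1}$ already lives, while keeping track of how many sign changes $Q$ must still respect. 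On such a configuration I would construct $Q$ from Chebyshev-type extremal polynomials: matching the prescribed sign pattern forces roots near the (possibly clustered, possibly very short) transition intervals, and a Remez-type estimate then bounds how large the coefficients of $Q$, hence the $\beta_n$, must become; this Remez loss — incurred across $\sim\kappa^{O(1)}$ intervals of relative width as small as $\kappa^{-O(1)}$ — is what produces the factor $\kappa^{\kappa^2\log\kappa}$. The hard part is precisely this last step: keeping the coefficients of $P$ under control while still forcing $P$ to be genuinely large against $|f|$ rather than merely non-orthogonal to it, the short and numerous sign-intervals allowed by the Poincar\'e bound being the main enemy and the reason the combinatorial lemma is needed to organise the estimate.
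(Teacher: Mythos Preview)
Your approach is genuinely different from the paper's, and the part you flag as ``the hard part'' is indeed a gap that the paper sidesteps entirely by a different mechanism.

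The paper does not build a test polynomial. It uses \emph{Sturm's monotonicity}: applying the inverse Laplacian cannot increase the number of interior roots. Writing $g_\ell=\sum_n n^{-2\ell}\langle f,\sin nx\rangle\sin nx$, each $g_\ell$ still has at most $d-1$ roots. One truncates at $M\sim\kappa$ so that frequencies $n>M$ carry a small fraction of the $L^2$ mass. The Combinatorial Lemma of \S3 is then applied to the \emph{middle block} $d+1\le n\le M$: it asserts that for some
\[
\ell\ \lesssim\ M^2\log M,
\]
bounded \emph{independently of the coefficients} $\langle f,\sin nx\rangle$, the sum $\sum_{d<n\le M}n^{-2\ell}\langle f,\sin nx\rangle\sin nx$ is dominated in $L^\infty$ by a single term $k^{-2\ell}\langle f,\sin kx\rangle\sin kx$ with $k>d$. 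That single term has $k-1\ge d$ roots; the tail $n>M$ is too small after $\ell$ inverse Laplacians to change this; hence the only way $g_\ell$ can have at most $d-1$ roots is if the low block $\sum_{n\le d}$ is large enough to destroy a root. This forces $\sum_{n\le d}|\langle f,\sin nx\rangle|\gtrsim M^{-2\ell}\sim\kappa^{-\kappa^2\log\kappa}$.

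So your guess about \S3 is wrong: the Combinatorial Lemma has nothing to do with selecting sub-collections of sign-intervals. It is a statement about vectors $(a_i/b_i^\ell)_i$ and says that, within a range of $\ell$ depending only on $n$ and the $b_i$ (not on the $a_i$), some iterate has one coordinate dominating all the others. That is exactly what produces an iterate $g_\ell$ that is essentially a pure frequency without knowing anything about the Fourier coefficients of $f$.

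As for your own route: once the $d-1$ roots of $f$ are $\delta$-clustered, your $Q$ of degree $d-1$ is forced to put all its zeros in an interval of length $\sim\delta$, so between consecutive zeros $|Q|\sim\delta^{d-1}$, and the $L^1$-mass of $f$ can concentrate precisely there. Your Poincar\'e bound controls the $L^2$-mass on short intervals, not the $L^1$-mass, and the conversion costs you the factor you cannot afford; Remez bounds a polynomial \emph{above} off a small set, not \emph{below} on it. Closing this seems to require an idea that is not in the proposal. Finally, your ``base case'' for bounded $\kappa$ is not soft: Sturm--Hurwitz gives positivity for each $f$, but the index $d$ up to which you sum depends discontinuously on $f$, so a compactness argument does not immediately yield a uniform lower bound.
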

We observe that the inequality provides a \textit{lower} bound on the size of certain Fourier coefficients, there seem to be very few inequalities of this type \cite{stein2}; moreover, this lower bound is motivated by a purely 'topological' object, the number of roots. Several additional remarks are in order.

\begin{quote} \textbf{Remarks.}
\begin{enumerate}
\item  It is not important that $\kappa$ has that precise form, the term 
$$ c\| \nabla f\|^{}_{L^2} \|f\|_{L^2}^{-1} \quad \mbox{can be replaced by} \quad c_s\| \nabla^s f\|^{1/s}_{L^2} \|f\|_{L^2}^{-1/s}$$
for any $s > 0$ where $c_s$ is a constant only depending on $s$. Any coercive pseudodifferential operator with the right homogeneity could be used.
\item We have no reason to assume that the scaling $\kappa^{-\kappa^2 \log{\kappa}}$ is optimal.
\item We also prove a version of the result for sign changes instead of roots (\S 2.3).
\item There is natural analogue of Theorem 1 with $\cos{(nx)}$ instead of $\sin{(nx)}$ that can be obtained simply by replacing Dirichlet with Neumann conditions.
\end{enumerate}
\end{quote}

\subsection{Airy Functions.}
The second example is meant to demonstrate that the underlying technique can truly be applied in the much broader context of general Sturm-Liouville problems. Let us consider the following ordinary differential equation on $[0,1]$ 
$$ - y''(x) + (1+x) y(x) = \lambda  y(x) \qquad \mbox{satisfying}~~y(0) = y(1) = 0.$$
This equation is still, purposefully, fairly simple so as to allow for a solution that can be written down in closed form: the general solution of the equation is given by
$$ y(x) = a \Ai(1 - \lambda + x) + b \Bi(1 - \lambda + x),$$
where $\Ai$ and $\Bi$ denote the two linearly independent solution of Airy's equation $y''= x y$.
If both boundary conditions can be satisfied, then necessarily
$$ \det \begin{pmatrix} \Ai(1 - \lambda) & \Ai(2 - \lambda) \\ \Bi(1 - \lambda) & \Bi(2 - \lambda) \end{pmatrix} = 0.$$

\begin{center}
\begin{figure}[h!]
\begin{tikzpicture}[scale=1]
\draw [thick, ->] (-4,-0.53) -- (4,-0.53);
\node at (0,0) {\includegraphics[width=0.5\textwidth]{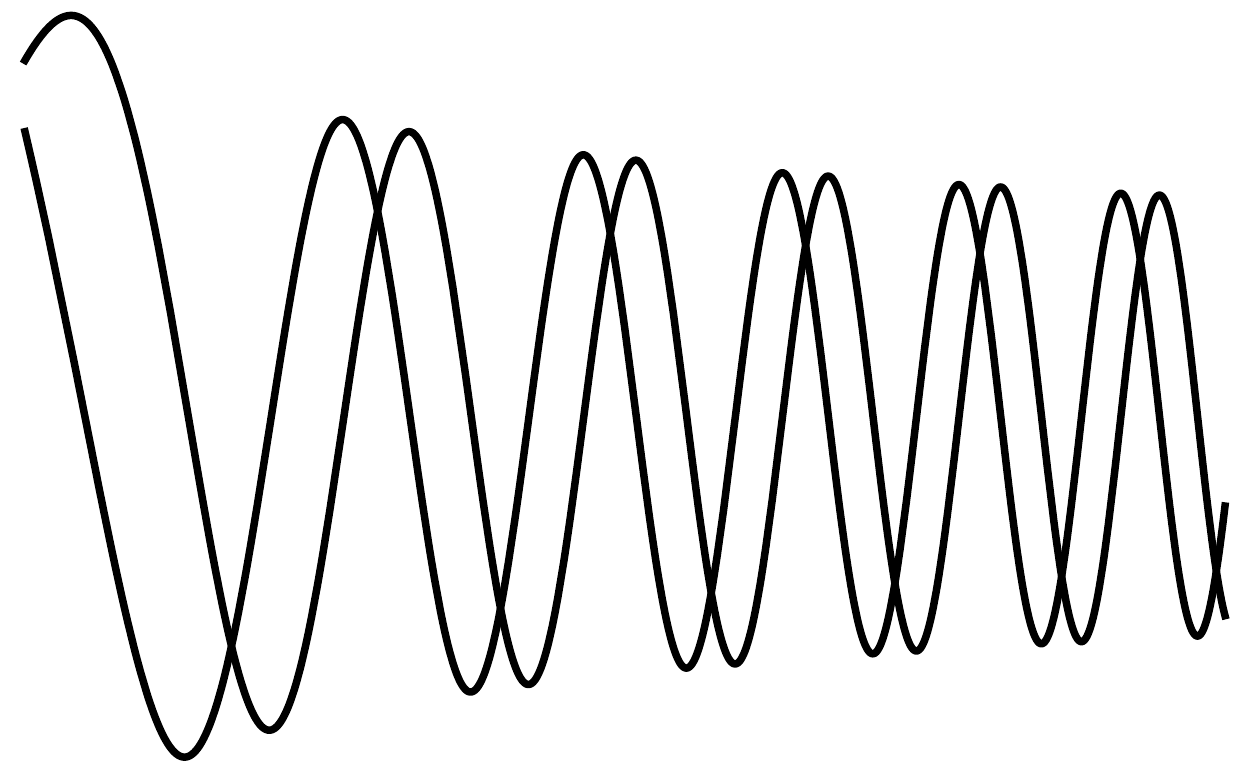}};
\end{tikzpicture}
\caption{$\Ai(1-x)$ and $\Bi(1-x)$ in the interval $(1.5, 15)$. There are infinitely many intervals of length 1 on which linear combinations of these two functions
give rise to a solution of the ODE. }
\end{figure}
\end{center}

This equation is satisfied for an increasing sequence of eigenvalues 
$$ \lambda_1 = 11.3685\dots, \lambda_2 = 40.9787\dots, \lambda_3 = 90.3266\dots $$
The eigenvalues grow like $\lambda_n \sim c n^2$ with a constant that could be explicitly computed (using the Weyl asymptotic \cite[Theorem 5.25]{teschl}). The associated eigenfunctions 
$$\phi_n(x) = a_n \Ai(1 - \lambda_n + x) + b_n \Bi(1 - \lambda_n + x)$$
are defined, up to sign, by $L^2-$normalization and behave quite similarly to the classical trigonometric functions. This is not
surprising since the equation can be interpreted as the classical ordinary differential equation for trigonometric functions subjected to a lower order perturbation.
Elementary facts about $\Ai$ and $\Bi$ are sufficient to deduce the same result with the same proof: if $f\in C[0,1]$ has $d-1$ roots (or, more generally, sign changes) in $(0,1)$, then we have, for some universal $c>0$,
$$  \sum_{n = 1}^{d}{ \left| \left\langle f,  \phi_n(x) \right\rangle \right|} \geq \kappa^{-\kappa^2 \log{\kappa}} \|f\|_{L^2([0,1])},~ \mbox{where} ~\kappa = c_{} \frac{\| \nabla f\|_{L^2([0,1])}}{\|f\|_{L^2([0,1])}}.$$
We see that this case is more or less identical to the case discussed above. This is not a coincidence and the observed behavior as well as the function $\kappa^{-\kappa^2 \log{\kappa}}$ are
actually generic. Again, there is no reason to assume that this function  $\kappa^{-\kappa^2 \log{\kappa}}$ is sharp for any of these cases.

\subsection{The general case.} We will now state the main result in a somewhat informal fashion. We assume that we are given a Sturm-Liouville problem satisfying the properties
described above. 

\begin{theorem} Let $f \in C[a,b]$ and assume $f$ has $d-1$ roots in $(a,b)$.  Then
$$  \sum_{n = 1}^{d}{ \left| \left\langle f,  \phi_n \right\rangle \right|} \geq g(\kappa) \|f\|_{L^2([0,\pi])} \qquad \mbox{where} ~\kappa = c_{} \frac{\| \nabla f\|_{L^2([0,\pi])}}{\|f\|_{L^2([0,\pi])}}$$
and $g:\mathbb{R}_{>0} \rightarrow \mathbb{R}_{>0}$ is a function that only depends on (and can be explicitly constructed out of) the eigenvalues $\lambda_n$, the sequence $(\|\phi_n\|_{L^{\infty}})_{n \in \mathbb{N}}$ and the sequence
$$ h(n) = \min_{1 \leq k \leq n} \left\{ \left| \phi_k(x)\right|: \phi_k'(x) = 0\right\}$$
if we are dealing with Dirichlet or Neumann boundary conditions. Under general boundary conditions, we define $h$(n) as the smallest maximal value of $|\phi_k|$, $k \leq n$, within a nodal domain.

\end{theorem}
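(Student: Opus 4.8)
The plan is to reduce the general statement to a single quantitative mechanism: a function with few roots but controlled $\|\nabla f\|_{L^2}$ cannot be too oscillatory, hence cannot hide entirely in the high-frequency part of the spectrum, so a definite fraction of its $L^2$-mass lives in $A = \mathrm{span}\{\phi_1,\dots,\phi_d\}$, and then this $L^2$-lower bound on $\|\pi_A f\|$ must be upgraded to an $\ell^1$-lower bound on the individual coefficients $\langle f, \phi_n\rangle$. First I would normalize $\|f\|_{L^2} = 1$ and write $f = \sum_n c_n \phi_n$ with $\sum_n c_n^2 = 1$ (using that $(\phi_n)$ is an orthonormal basis, after rescaling by the weight $w$). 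The hypothesis that $f$ has at most $d-1$ roots is used through the strong Sturm Oscillation Theorem in the contrapositive form already quoted in the introduction: if $\langle f, \phi_k\rangle = 0$ for all $k \le d$, then the first nonzero coefficient has index $m > d$, so $f = \sum_{k \ge m} c_k \phi_k$ would have at least $m-1 \ge d$ roots, a contradiction. So the issue is purely quantitative — how far from orthogonal $f$ is.

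The core estimate I would establish is a \emph{lossy} version of Sturm's theorem: there is a threshold $M = M(\kappa)$, growing like a fixed power of $\kappa$, such that $\sum_{n \le M} c_n^2 \ge 1/2$, i.e. the tail $\sum_{n > M} c_n^2 \le 1/2$. This is where $\|\nabla f\|_{L^2}$ enters. Since $\langle -(\p\phi_n')' + q\phi_n, \phi_n\rangle = \lambda_n$ after the appropriate normalization, and $q \ge 0$, $p$ bounded below, a tail bound $\sum_{n > M} c_n^2$ is controlled by $\|\nabla f\|_{L^2}^2 / \lambda_M \lesssim \kappa^2 / \lambda_M$; choosing $M$ with $\lambda_M \ge C\kappa^2$ (possible with $M$ polynomial in $\kappa$ by the Weyl asymptotic $\lambda_n \sim cn^2$) forces the tail below $1/2$. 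The subtlety is that $M$ may exceed $d$, so this alone does not bound $\sum_{n\le d} c_n^2$ from below. To cross from $M$ down to $d$ I would invoke the combinatorial Lemma of \S 3 (which I am allowed to assume): it is exactly the device that converts "most of the $\ell^2$-mass sits in the first $M$ coordinates, and the full vector is not orthogonal to the first $d$" into "the first $d$ coordinates already carry a quantitative share," with the loss being governed by how the eigenfunctions $\phi_{d+1},\dots,\phi_M$ can interact — this is why the final $g$ depends on $(\|\phi_n\|_{L^\infty})$ and on $h(n)$, which measure the amplitude of the oscillations that an intermediate-frequency combination must exhibit between consecutive roots. Concretely, a linear combination $\sum_{k=d+1}^M a_k\phi_k$ that is nearly as large as $f$ in $L^2$ would, by Sturm, change sign at least $d$ times; between two consecutive sign changes it attains an extremum whose size is at least $h(M)$ times the typical coefficient magnitude, yet its $L^\infty$ norm is at most $\sum_k |a_k| \|\phi_k\|_{L^\infty}$ — playing these two bounds against the count of sign changes forces the projection onto $A$ to be bounded below by a function of $\kappa$ built from these data.

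Finally, to pass from $\|\pi_A f\|_{L^2} \ge g_0(\kappa)$ to $\sum_{n\le d}|\langle f,\phi_n\rangle| \ge g(\kappa)\|f\|_{L^2}$ I would use the crude inequality $\|\pi_A f\|_{L^2}^2 = \sum_{n\le d} c_n^2 \le \big(\max_{n \le d}\|\phi_n\|_{L^\infty}\big)\cdot\|f\|_{L^1}\cdot\sum_{n\le d}|c_n|$ is not quite what is wanted; instead, more simply, $\|\pi_A f\|_{L^2} \le \sum_{n \le d} |c_n| \cdot \max_{n\le d}\|\phi_n\|_{L^2}$ in the $w$-weighted norm (which is $1$), or just note $\sum_{n\le d}|c_n| \ge \|\pi_A f\|_{L^2}$ directly since $\ell^1 \ge \ell^2$; then $g(\kappa) = g_0(\kappa)$ suffices and all dependence on $(\|\phi_n\|_{L^\infty})$, $(\lambda_n)$, $h(n)$ is inherited from the combinatorial step. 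For the $\sin(nx)$ case, $\|\phi_n\|_{L^\infty} \equiv 1$, $h(n) \equiv 1$, and $\lambda_n = n^2$, so $M \asymp \kappa$ and the combinatorial loss collapses to the explicit $\kappa^{-\kappa^2\log\kappa}$ of Theorem 1; for the Airy case the analogous elementary bounds on $\Ai,\Bi$ on a bounded interval give the same asymptotics. The main obstacle I anticipate is the combinatorial step — quantifying precisely how an intermediate-frequency window of coefficients can conspire to keep $f$ nearly orthogonal to $A$ while still respecting the root count — which is presumably the reason \S 3 is devoted to an isolated lemma; everything else is bookkeeping with the spectral theorem and the Weyl asymptotic.
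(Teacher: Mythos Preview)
Your proposal has the right scaffolding---the tail bound via $\|\nabla f\|_{L^2}$ and the Weyl asymptotic giving a cutoff $M\sim\kappa$, and the recognition that the combinatorial Lemma of \S 3 must bridge the gap between $M$ and $d$---but there is a genuine missing idea in the bridge itself. You describe the combinatorial Lemma as a device that ``converts `most of the $\ell^2$-mass sits in the first $M$ coordinates, and the full vector is not orthogonal to the first $d$' into `the first $d$ coordinates already carry a quantitative share'.'' That is not what the Lemma says, and no such direct transfer principle exists: one can easily build $f$ with $d-1$ roots, almost all mass in the window $d+1\le n\le M$, and the first $d$ coefficients as small as one likes relative to $M$ alone. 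Your fallback argument---that the middle block $\sum_{d+1}^M a_k\phi_k$ has at least $d$ sign changes and that comparing extremum size $h(M)$ against $L^\infty$ norm somehow forces $\pi_A f$ to be large---is circular: the conclusion you want is that the low block is \emph{not} small, so you cannot assume at the outset that the root count of the middle block governs the root count of $f$.

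What the paper actually does, and what you are missing, is a \emph{smoothing iteration}. One replaces $g$ by $g_\ell=\sum_n c_n\lambda_n^{-\ell}\phi_n$ and invokes Sturm's monotonicity formula: applying the inverse operator cannot increase the number of roots, so $g_\ell$ still has at most $d-1$ roots for every $\ell$. The combinatorial Lemma is then used exactly as stated---given the coefficients $|c_n|$ and weights $\lambda_n$, it produces a bounded $\ell$ (of size $\lesssim M^2\log M$) for which the middle block $\sum_{d+1}^{M} c_n\lambda_n^{-\ell}\phi_n$ is, up to an $\varepsilon$-error in $L^\infty$, a single eigenfunction $\phi_k$ with $k\ge d+1$. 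That single eigenfunction has $k-1\ge d$ roots; the tail block is negligible in $L^\infty$ after division by $\lambda_n^\ell$; therefore the low block of $g_\ell$, whose $L^\infty$ norm is at most $\sum_{n\le d}|c_n|\lambda_n^{-\ell}\|\phi_n\|_{L^\infty}$, must be large enough (compared to $h(M)\cdot\|\text{middle}\|$) to destroy at least one of those $d$ roots. This is where $h(n)$ genuinely enters, and it yields the lower bound $\sum_{n\le d}|c_n|\gtrsim \lambda_{M}^{-\ell}\|g\|_{L^2}$, which after inserting $\ell\lesssim M^2\log M$ and $\lambda_M\sim M^2\sim\kappa^2$ gives the $\kappa^{-\kappa^2\log\kappa}$ shape. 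Without the pair (iterated inverse operator $+$ Sturm monotonicity), there is no mechanism linking the root count of $f$ to the root count of any block of its spectral decomposition, and the argument does not close.
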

We observe that all three quantities are rather well-behaved and close to universal (up to constants).  In particular, we know that $\lambda_n \sim n^2$ (where the implicit constant depends on the actual Sturm-Liouville problem), we expect that $\|\phi_n\|_{L^{\infty}} \sim 1$ and that $h(n) \sim 1$. All three conditions are satisfied in both examples that were considered above. There are various types of improvements that seem feasible. In particular, we can replace 'roots' by 'sign changes' -- the proof of this statement is merely a minor and straightforward variation on our proof (we chose the version stated in Theorem 2 primarily for its simplicity and similarity to the classical Sturm Oscillation Theorem; the proof of Theorem 2 addresses the necessary modifications).

\subsection{Related results.} The results in this paper are related (and partially inspired) by the following type of inverse problem: let $f \in C^{\infty}_{c}(0,1)$ and let $T$ be an
integral operator 
$$ Tf = \int_{\mathbb{R}}^{}{k(x,y)f(y) dy}.$$
Under which conditions on the kernel $k(x,y)$ is it possible to obtain results of the following type: the quantity $\|Tf\|_{L^2(J)}$, $J$ being a generic interval, say $J=(2,3)$, is not very small unless $f$ oscillates very rapidly on $(0,1)$? This is still somewhat vague but every possible way of making it precise (say, choice of norms etc.) would be interesting.
This question is rather fundamental but also severely ill-posed since integral operators are compact: in particular, there are necessarily functions $f$ that do not even oscillate all that rapidly for which
$Tf$ is indeed quite tiny (see Fig. 2). Very little seems to be known. Al-Aifari, Pierce and the author \cite{rima} have shown that for the Hilbert transform $H$, the kernel
being $k(x,y) = (x-y)^{-1}$ and for disjoint intervals $I,J \subset \mathbb{R}$,
$$  \|H f\|_{L^2(J)} \geq c_1 \exp{\left(-c_2\frac{ \|f_x\|_{L^2(I)}}{\|f\|_{L^2(I)}}\right)} \| f \|_{L^2(I)},$$
where the constants $c_1, c_2$ depend only on the intervals $I,J$. This result is sharp up to constants in the following sense: if $c_2$ is too small, then there is
an infinite sequence of orthonormal functions for which the inequality fails. 
We refer to Fig. 2 for an example of a rather nice, smooth function  (this example is taken from \cite{rima}) which shows that some type of strong decay in these estimates will indeed be necessary.
Sharp inequalities of a similar type have since been established 
for the Laplace transform the Fourier transform and the Riesz transform \cite{angie, roy}. So far, nothing seems to be known for general integral operators (or even integral operators of
convolution type with arbitrarily strong restrictions on the kernel $k(x-y)$). Currently, there are two different types of approaches to this question. We start by describing the first one:
\begin{enumerate} 
\item rewrite the quantity for which we would like to obtain lower bounds as  
$$ \|Tf\|_{L^2}^2 = \left\langle Tf, Tf \right\rangle = \left\langle T^* T f, f\right\rangle$$
\item the operator $T^* T$ is self-adjoint and we can apply the spectral theorem; the eigenvalues $\lambda_n$ of $T^* T$ converge very quickly to
zero but there is some quantitative control on the decay
\item there exists a (higher-order) differential operator $D$ such that
$$ T^*TD = D T^* T \qquad \mbox{(Slepian's miracle)}$$
\item this implies that the eigenfunctions of $T^*T$ and $D$ coincide and we may work with eigenfunctions of $D$ instead: if the function does not oscillate rapidly, then $\|Df\|_{L^2}$ is small implying that a spectral expansion of $f$ into eigenfunctions has a nontrivial component at low frequency. This, in turn, ensures that $\|Tf\|$ cannot be too small. 
\end{enumerate}

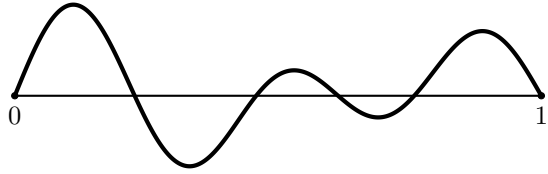
\begin{figure}[h!]
\begin{center}
\begin{tikzpicture}[xscale=7,yscale=1.1*(7/9)]
\draw [ultra thick, domain=0:1, samples = 300] plot (\x, {-0.15269*sin(2*pi*\x r)    +    0.4830*sin(3*pi*\x r)  +  0.3084*sin(4*pi*\x r)  + 0.80509*sin(5*pi*\x r)}  );
\draw [thick, domain=0:1] plot (\x, {0}  );
\filldraw (0,0) ellipse (0.006cm and 0.048cm);
\node at (0,-0.3) {0};
\filldraw (1,0) ellipse (0.006cm and 0.048cm);
\node at (1,-0.3) {1};
\end{tikzpicture}
\caption{$f:[0,1] \rightarrow \mathbb{R}$ with $\|Hf\|_{L^2[2,3]} \sim 10^{-3}\|f\|_{L^2[0,1]}$} 
\end{center}
\end{figure}

Step (3) is problematic since, to the best of our knowledge, it can only be carried out in fairly specialized circumstances (this limits the applicability of the method). 
The second and completely different approach is due to A. R\"uland \cite{angie} and based on PDE techniques:  the Hilbert transform can be realized as the tangential limit of a harmonic equation in higher dimensions for which it is possible to use propagation of smallness results. This approach can be applied to the classical Hilbert transform and Riesz transforms in higher dimensions: just like the other method,
it seems unlikely that it can be extended to much more general perators.
The result discussed in this paper has immediate
applications to step (4): if $D$ happens to be a Sturm-Liouville operator, then the results in this paper suggest the existence of lower bounds based on the number of sign
changes alone. More precisely, if $f \in C[a,b]$ has $d-1$ roots in $(a,b)$, then we obtain quantitative control on the inner product of
$f$ with the first $d$ eigenfunctions of the operator and can thus bound the size of $\|T f\|$ in terms of $\|\nabla f\|_{}, \|f\|$ and the eigenvalues of $T^* T$.

\section{A Combinatorial Lemma} 
 The proof has one amusing ingredient of a combinatorial flavor. Let
$ a \in \mathbb{R}_{\geq 0}^{n}$ be a vector with nonnegative entries and consider the sequence of vectors $a_1, a_2, \dots$ defined via
$$ a_{\ell} = (a_1, a_2/2^{\ell}, a_3/3^{\ell}, \dots, a_n/n^{\ell}).$$

We see that each entry is undergoing exponential decay but at different (exponential) rates. If $a \neq 0$, then there must exist some $\ell$ such that $a_{\ell}$ has essentially
one entry that is much larger than all the other entries combined. By taking $\ell \rightarrow \infty$, it is easy to see that the first nonzero entry has the desired property
but the value of $\ell$ necessary for this to occur may be arbitrarily large (depending on how small the first nonzero entry is). However, if we are happy with merely finding
\textit{some} entry that is much larger than the rest combined (not necessarily the first nonzero one that is guaranteed to dominate in the limit), then we can guaranteed that this is possible for a fairly small value of $\ell$ 
where 'fairly small' does \textit{not} depend on the entries of $a$.

\begin{lemma} Let $ 0 \neq(a_1, \dots, a_n) \in \mathbb{R}_{\geq 0}^n$ and $0< b_1 < \dots < b_n$. 
For every $\varepsilon \in (0,1/2)$, there
exists 
$$\ell \leq  \frac{n \log{(9n/\varepsilon^2)}}{ \log\left( \min_{2 \leq i \leq n}{ \frac{b_{i}}{b_{i-1}}} \right)} $$ 
and $ k \in \left\{1, \dots, n\right\}$ such that
$$  \sum_{i=1}^{n}{ \frac{a_i}{b_i^{\ell}}} \leq (1+\varepsilon) \frac{a_k}{b_k^{\ell}}.$$
\end{lemma}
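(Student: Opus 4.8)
The plan is to argue by contradiction: suppose that for \emph{every} $\ell$ in the allowed range no index $k$ satisfies the stated domination inequality. Normalizing so that $\max_i a_i = 1$, pick the smallest index $m$ with $a_m = 1$. If the sum $\sum_i a_i/b_i^\ell$ is never within a factor $(1+\varepsilon)$ of its largest term $a_k/b_k^\ell$, then in particular it is never within that factor of the term $i=m$, so the ``tail'' (everything except the dominant term) always carries a definite fraction of the mass. The key quantitative handle is that as $\ell$ grows, each term $a_i/b_i^\ell$ shrinks like $b_i^{-\ell}$, and since the $b_i$ are strictly increasing the ratio between the term at index $m$ and any term at an index $i > m$ grows geometrically in $\ell$, at rate at least $\log(\min_i b_i/b_{i-1})$; meanwhile indices $i<m$ had $a_i < 1$ strictly but we do not even need that — the real point is that the \emph{largest} surviving term eventually wins, so failure of domination can only persist for a bounded number of steps.

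More concretely, I would track the quantity $Q(\ell) = \big(\sum_i a_i/b_i^\ell\big) / \big(\max_i a_i/b_i^\ell\big) \ge 1$. There are only $n$ possible ``leaders'' (values of the argmax), and as $\ell$ increases the leader can only move to a larger index finitely often — at most $n-1$ times — because once index $j$ has strictly overtaken index $i$ (meaning $a_j/b_j^\ell > a_i/b_i^\ell$) it keeps doing so for all larger $\ell$ when $b_j < b_i$; so the leader index is eventually non-increasing, hence eventually constant. On any range of $\ell$ where the leader is a fixed index $k$, the competitors $a_i/b_i^\ell$ with $i \ne k$ that are ``ahead in line'' ($b_i < b_k$) are already beaten, and those ``behind'' ($b_i > b_k$) decay relative to $a_k/b_k^\ell$ by a factor at least $(\min_j b_j/b_{j-1})$ per unit increase of $\ell$. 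Therefore if at some $\ell_0$ the leader is $k$ and domination fails, i.e.\ $\sum_{i \ne k} a_i/b_i^\ell \ge \varepsilon \, a_k/b_k^\ell$, then after at most $O\!\big(\log(n/\varepsilon) / \log(\min_j b_j/b_{j-1})\big)$ further steps the part of that tail coming from indices behind $k$ has dropped below $\varepsilon/(2n)$ of the leader, and the only way the tail can stay large is if a new (necessarily smaller-indexed, larger-$b$... wait, smaller $b$) leader takes over — but that can happen at most $n$ times, so the total number of steps before domination must occur is at most roughly $n$ times $\log(n/\varepsilon^2)/\log(\min_j b_j/b_{j-1})$, matching the claimed bound (the constant $9$ and the square on $\varepsilon$ will come out of bookkeeping the $(1+\varepsilon)$ versus $(1+\varepsilon)^2$ type losses and the union-bound over $\le n$ tail terms).

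The cleanest way to run the bookkeeping is probably not to literally chase leader changes but to fix the eventual leader $k^\ast$ (the index achieving $\max_i a_i^{1/\,?}$... rather: the index $i$ minimizing $b_i$ among those with $a_i$ maximal, or more robustly the index that is the argmax of $a_i/b_i^\ell$ for all large $\ell$) and show directly: for $\ell$ exceeding the stated threshold, every competitor term is already at most $\varepsilon/(2n)$ times the $k^\ast$ term. That requires, for each $i \ne k^\ast$, that $(b_i/b_{k^\ast})^\ell \ge (2n/\varepsilon)(a_i/a_{k^\ast})$ when $b_i > b_{k^\ast}$, and handling $b_i < b_{k^\ast}$ by noting $a_i/a_{k^\ast}$ is small in a way forced by $k^\ast$ being the eventual leader; taking logs and using $b_i/b_{k^\ast} \ge \min_j b_j/b_{j-1}$ (for adjacent indices; for non-adjacent ones the ratio is only larger) gives exactly $\ell \ge \log(2n \cdot a_i/(a_{k^\ast}\varepsilon)) / \log(\min_j b_j / b_{j-1})$, and the factor $n$ in the numerator of the Lemma's bound absorbs the worst-case ratio $a_i/a_{k^\ast} \le n$-type estimates once we are careful.

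I expect the main obstacle to be exactly this last point: controlling the terms $i$ with $b_i < b_{k^\ast}$, where the geometric decay works \emph{against} us (those terms decay \emph{slower} than the leader). The resolution is that such an index $i$ cannot have $a_i$ too large relative to $a_{k^\ast}$ — otherwise $i$, not $k^\ast$, would be the leader for large $\ell$ — and quantifying ``too large'' carefully, propagated through all $n$ indices, is what produces the $n$ in $n\log(9n/\varepsilon^2)$ rather than a single $\log$. Getting the constants to land exactly on $9$ and $\varepsilon^2$ is routine once the structure is set up, so I would not belabor it.
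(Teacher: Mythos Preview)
There is a genuine gap in both routes you sketch. The ``cleanest'' approach---fix the eventual leader $k^\ast$ and show every other term is at most $(\varepsilon/2n)\cdot a_{k^\ast}/b_{k^\ast}^\ell$ once $\ell$ reaches the stated bound---cannot work, because the ratio $a_i/a_{k^\ast}$ is \emph{not} bounded by anything like $n$; it can be arbitrarily large. Take $a_1=\delta$ tiny and $a_2=\dots=a_n=1$: then $k^\ast=1$, and forcing $a_2/b_2^\ell \le (\varepsilon/2n)\,a_1/b_1^\ell$ needs $\ell \gtrsim \log(1/\delta)/\log\rho$, unbounded as $\delta\to 0$. The lemma still holds here (for moderate $\ell$ the index $k=2$ dominates), but your argument misses it precisely because it insists on the eventual leader. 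Your leader-tracking approach is closer but also incomplete: the claim ``the only way the tail can stay large is if a new leader takes over'' is unjustified. During leader $k$'s reign the ``ahead'' indices $i<k$ each satisfy $a_i/b_i^\ell \le a_k/b_k^\ell$ individually, so collectively they can contribute up to $(k-1)$ times the leader term with no single one overtaking; domination can therefore fail for many steps with no leader change. This gap is repairable---if the ahead-tail exceeds $\varepsilon/2$ of the leader then some ahead term is within $2n/\varepsilon$ of the leader, forcing a takeover within $O(\log(n/\varepsilon)/\log\rho)$ further steps---but that is an additional argument, not bookkeeping.

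The paper's proof avoids the whole difficulty by peeling from the \emph{top} rather than tracking a leader from below. It checks whether the \emph{last} term $a_n$ already dominates; if not, then $a_n \le \varepsilon^{-1}\sum_{i<n} a_i$, and after $\ell$ steps with $b^\ell \le \varepsilon^2/n$ (where $b=\max_i b_{i-1}/b_i<1$) one has $a_n/b_n^\ell \le (\varepsilon/n)\sum_{i<n} a_i/b_i^\ell$, so the last term can be absorbed at multiplicative cost $(1+\varepsilon/n)$ and one recurses on the first $n-1$ terms. At most $n$ rounds occur, each costing $\log(n/\varepsilon^2)/\log(1/b)$ in $\ell$; the accumulated factors $(1+\varepsilon/n)^n\le e^\varepsilon$ combine with the terminal $(1+\varepsilon)$ to give $(1+3\varepsilon)$. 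The structural point you are missing is that by always working with the \emph{fastest}-decaying remaining term, one never has to control indices with smaller $b_i$ than the current one---those are simply deferred to later rounds.
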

Since we can always apply the result to the list of numbers $ \left( a_1/b_1^L, a_2/b_2^L, \dots, a_n/b_n^L\right),$ the Lemma says that the set of integers $\ell$ for which there exists such a suitable $k$ is infinite and has bounded gaps.  As mentioned above, all sufficiently large numbers have the desired property (but 'sufficiently large' will depend on the coefficients which would be insufficient for our problem). 
We quickly illustrate how the Lemma is going to be applied in the special case of trigonometric functions. Let the function $f: (0,\pi) \rightarrow \mathbb{R}$ be given by
$$ f(x) = \sum_{k=1}^{n}{a_k \sin{(kx)}} \qquad \mbox{and define} \qquad f_{\ell} = \frac{(-\Delta)^{-\ell}f_{}}{\left\| (-\Delta)^{-\ell}f_{} \right\|_{L^{\infty}}}.$$
If $f \not\equiv 0$, then the sequence $f_{\ell}$ converges to a pure sine frequency as $\ell \rightarrow \infty$. 
Our Lemma implies that $f_{\ell}$ will already be uniformly $\varepsilon-$close to \textit{some} pure sine frequency (not necessarily the one arising in the limit) for at least one
$$\ell \lesssim n^2 \log{\left( \frac{n}{\varepsilon^2}\right)}.$$
\begin{figure}[h!]
\begin{center}
\begin{tikzpicture}[xscale=7,yscale=0.15)]
\draw [thick, domain=0:1, samples = 300] plot (\x, {0.01*sin(pi*\x r)    +    5*sin(3*pi*\x r)  +  3*sin(5*pi*\x r)  + 4*sin(12*pi*\x r)  + 5*sin(20*pi*\x r)}  );
\draw [ultra thick, domain=0:1, samples = 300] plot (\x, {(0.01*sin(pi*\x r)    +    5*sin(3*pi*\x r)/9^2  +  3*sin(5*pi*\x r)/25^2 )*160  }  );
\draw [dashed, domain=0:1, samples = 300] plot (\x, {11*sin(pi*\x r)     }  );
\draw [thick, domain=0:1] plot (\x, {0}  );
\filldraw (0,0) ellipse (0.006cm and 0.048cm);
\node at (-0.03,-1.3) {0};
\filldraw (1,0) ellipse (0.006cm and 0.048cm);
\node at (1.03,-1) {$\pi$};
\end{tikzpicture}
\caption{The Combinatorial Lemma applied to trigonometric polyomials: for any trigonometric polynomial $f$ of degree $n$, the sequence $(-\Delta)^k f/ \|(-\Delta)^k f\|_{L^{\infty}}$ converges
to the lowest non-vanishing frequency ($f_{100}$, dashed) as $k \rightarrow \infty$. However, the sequence contains at least one element (here: $f_{2}$, bold) where $k \lesssim_{\varepsilon} n^2 \log{n}$ that is essentially a pure frequency with a small error term.}
\end{center}
\end{figure}
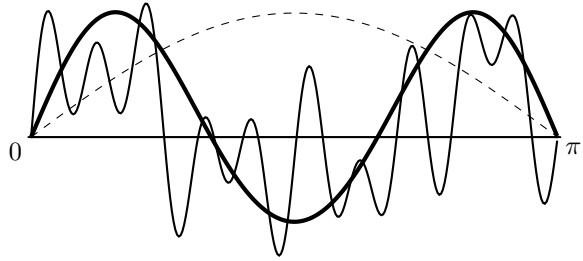

\begin{proof}[Proof of the Lemma.]
We start by observing that we can assume that
$$ \sum_{i=1}^{n}{a_i} \geq (1+\varepsilon)a_n \qquad \mbox{or, equivalently,} \quad a_n \leq \frac{1}{\varepsilon} \sum_{i=1}^{n-1}{a_i}$$
because otherwise we are already done ($k=n$ and $\ell = 0$). This shows us that at least some of the mass is distributed on the first $n-1$ coordinates.
The exponential decay then implies that by taking sufficiently large powers, we can ensure that that remaining mass grows by a disproportional
factor. Abbreviating
 $$ b = \max_{2 \leq i \leq n}{ \frac{b_{i-1}}{b_i} },$$
we see that
$$ \frac{a_n}{b_n^{\ell}} \leq   \frac{1}{b_n^{\ell}} \frac{1}{\varepsilon} \sum_{i=1}^{n-1}{a_i} \leq \frac{1}{\varepsilon} \sum_{i=1}^{n-1}{ \left(\frac{b_{i}}{b_n}\right)^{\ell}  \frac{a_i}{b_i^{\ell}}} \leq \frac{b^{\ell}}{\varepsilon} \sum_{i=1}^{n-1}{ \frac{a_i}{b_i^{\ell}}}.$$ 
This shows that, as soon as $b^{\ell} \leq \varepsilon^2/n$, we have that
$$ \frac{a_n}{b_n^{\ell}} \leq \frac{\varepsilon}{n} \sum_{i=1}^{n-1}{ \frac{a_i}{b_i^{\ell}}} \quad \mbox{and thus} \quad  \sum_{i=1}^{n}{ \frac{a_i}{b_i^{\ell}}} \leq \left(1 + \frac{\varepsilon}{n}\right)  \sum_{i=1}^{n-1}{ \frac{a_i}{b_i^{\ell}}}$$
We note that if these two inequalities is satisfied for one $\ell$, then it is automatically satisfied by all larger integers as well. 
We now fix the smallest $\ell$ such that $b^{\ell} \leq \varepsilon^2/n$ and repeat the process on on the new set
$$ \left\{\frac{a_1}{b_1^{\ell}}, \frac{a_2}{b_2^{\ell}}, \dots, \frac{a_{n-1}}{b_{n-1}^{\ell}} \right\}.$$
We will continue the process if
$$ \sum_{i=1}^{n-1}{\frac{a_i}{b_i^{\ell}}} \geq (1+\varepsilon) \frac{a_{n-1}}{b_{n-1}^{\ell}} \qquad \mbox{or, equivalently,} \quad  \frac{a_{n-1}}{b_{n-1}^{\ell}} \leq \frac{1}{\varepsilon} \sum_{i=1}^{n-1}{\frac{a_i}{b_i^{\ell}}}$$
and stop if that inequality is not satisfied. Suppose we continue. Then there exists $\ell_2$ such that
$$ \frac{a_{n-1}}{b_{n-1}^{\ell +\ell_2}}  \leq \frac{b^{\ell_2}}{\varepsilon} \sum_{i=1}^{n-2}{ \frac{a_i}{b_i^{\ell + \ell_2}}}.$$ 
We pick $\ell_2$ as the smallest integer for which $b^{\ell_2} \leq \varepsilon^2/n$ (in particular, $\ell_2 = \ell$) and repeat the process.
Ultimately, the process terminates and we are left with an index $1 \leq m \leq n$ and an integer $L \in \mathbb{N}$ such that 
$$  \sum_{k=1}^{m}{\frac{a_k}{b_k^L}} \leq  (1 + \varepsilon) \frac{a_m}{b_m^L} .$$
By construction of the process, we have
\begin{align*}
 \sum_{k=1}^{n}{\frac{a_k}{b_k^L}} \leq \left(1 + \frac{\varepsilon}{n}\right) \sum_{k=1}^{n-1}{\frac{a_k}{b_k^L}} \leq  \left(1 + \frac{\varepsilon}{n}\right)^2 \sum_{k=1}^{n-2}{\frac{a_k}{b_k^L}} \leq  \left(1 + \frac{\varepsilon}{n}\right)^{n-m} \sum_{k=1}^{m}{\frac{a_k}{b_k^L}} \leq e^{\varepsilon}  \sum_{k=1}^{m}{\frac{a_k}{b_k^L}}
\end{align*}
and therefore, for $0 < \varepsilon < 1/2$,
$$  \sum_{k=1}^{n}{\frac{a_k}{b_k^L}} \leq e^{\varepsilon}  \sum_{k=1}^{m}{\frac{a_k}{b_k^L}} \leq e^{\varepsilon}  (1 + \varepsilon) \frac{a_m}{b_m^L} \leq (1 + 3\varepsilon) \frac{a_m}{b_m^L}.$$
This is the desired statement.
It remains to understand how large $L$ can be: in the worst case, we have to run the scheme for $n$ steps, each step requiring to take a power large enough so that $b^{\ell} \leq \varepsilon^2/n$ implying
$$L \leq n \frac{\log{(n/\varepsilon^2)}}{\log{(1/b)}}.$$
\end{proof}

\section{Proof of Theorem 2}
This section gives the proof of the main result, a proof of Theorem 1 follows by specializing $\phi_n(x) = \sin{nx}$. 
We require one elementary Lemma stating that Sturm-Liouville eigenfunctions essentially behave as trigonometric functions in the sense of inducing an
equivalent Sovolev space. 

\begin{lemma} We have, up to absolute constants depending only on $p(x),q(x),w(x)$,
$$\| \nabla f \|_{L^2}^2 \sim \sum_{n=1}^{\infty}{\lambda_n \left| \left\langle f, \phi_n \right\rangle \right|^2}$$
for all functions $f \in C^2[a,b]$ that have at least one root.
\end{lemma}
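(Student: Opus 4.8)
The plan is to recognise the right‑hand side as the quadratic form naturally attached to the Sturm–Liouville operator $Lu=-(pu')'+qu$, and then to compare that form with $\|\nabla f\|_{L^2}^2$; the root hypothesis will only be needed in the second step, where it supplies a Poincaré inequality. Normalise the $\phi_n$ so that they form an orthonormal basis of $L^2(a,b;w\,dx)$ and let $\langle\cdot,\cdot\rangle$ be that inner product (any other natural normalisation only changes the implied constants, since $w$ is pinched between two positive constants). Because $p>0$, $q\ge 0$ and $\alpha,\beta\ge 0$, the form
$$ \mathcal E(u,v) \;=\; \int_a^b p\,u'v'\,dx \;+\; \int_a^b q\,uv\,dx \;+\; \alpha\, u(a)v(a) \;+\; \beta\, u(b)v(b) $$
is non‑negative (for Dirichlet data the boundary terms are omitted and the form domain consists of the $H^1$ functions vanishing at the corresponding endpoint). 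A single integration by parts, using the boundary conditions satisfied by $\phi_n$, gives $\mathcal E(u,\phi_n)=\lambda_n\langle u,\phi_n\rangle$ for every $u$ in the form domain, and in particular $\mathcal E(\phi_m,\phi_n)=\lambda_n\delta_{mn}$.

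First I would prove the spectral identity $\mathcal E(f,f)=\sum_{n}\lambda_n\langle f,\phi_n\rangle^2$. Writing $c_n=\langle f,\phi_n\rangle$ and $S_Nf=\sum_{n\le N}c_n\phi_n$, the relations above give $\mathcal E(S_Nf,S_Nf)=\sum_{n\le N}\lambda_n c_n^2$ and $\mathcal E(S_Nf,\,f-S_Nf)=0$ (the latter again from $\mathcal E(\phi_n,g)=\lambda_n\langle\phi_n,g\rangle$). Since $\mathcal E\ge 0$, expanding $\mathcal E(f,f)=\mathcal E(S_Nf,S_Nf)+\mathcal E(f-S_Nf,f-S_Nf)$ already yields $\sum_{n\le N}\lambda_n c_n^2\le\mathcal E(f,f)$ for all $N$, hence $\sum_n\lambda_n c_n^2\le\mathcal E(f,f)$; the matching lower bound is the convergence $S_Nf\to f$ in the form domain, which is standard for $f$ in that domain.

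It remains to compare $\mathcal E(f,f)$ with $\|\nabla f\|_{L^2}^2=\|f'\|_{L^2}^2$. Since $\inf_{[a,b]}p>0$ one gets $\mathcal E(f,f)\ge (\inf p)\,\|f'\|_{L^2}^2$ immediately, which is one half of the equivalence. For the other half, $\mathcal E(f,f)\le \|p\|_{L^\infty}\|f'\|_{L^2}^2+\|q\|_{L^\infty}\|f\|_{L^2}^2+\alpha f(a)^2+\beta f(b)^2$, so one must absorb the lower‑order and boundary terms — and this is exactly where the hypothesis that $f$ has a root $\xi\in[a,b]$ enters: from $f(x)=\int_\xi^x f'(t)\,dt$ and Cauchy–Schwarz one gets $\|f\|_{L^\infty[a,b]}\le (b-a)^{1/2}\|f'\|_{L^2}$, hence $\|f\|_{L^2}\le (b-a)\|f'\|_{L^2}$ and $f(a)^2,f(b)^2\le (b-a)\|f'\|_{L^2}^2$. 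Thus $\mathcal E(f,f)\le C\|f'\|_{L^2}^2$ with $C$ depending only on $p,q,w$ (and on $a,b$ and the boundary data), and combining the two inequalities gives $\|\nabla f\|_{L^2}^2\sim\mathcal E(f,f)=\sum_n\lambda_n\langle f,\phi_n\rangle^2$.

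The step I expect to be the only genuinely delicate one is the interaction with the boundary conditions: one must ensure $f$ lies in the form domain so that the integration by parts against $\phi_n$ produces no leftover boundary terms and the expansion $S_Nf\to f$ converges there. For Neumann or Robin data this is automatic for any $f\in H^1$; for Dirichlet data it requires $f(a)=f(b)=0$, which holds in the situations where the Lemma is actually used (where $f$ is a finite linear combination of the $\phi_n$, and hence satisfies the boundary conditions) but in general would need either this extra vanishing or an explicit treatment of the boundary correction terms. Everything else — non‑negativity of $\mathcal E$, the integration by parts, and the Poincaré estimate extracted from the root — is routine.
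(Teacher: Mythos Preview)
Your proposal is correct and follows essentially the same route as the paper: identify $\sum_n \lambda_n\langle f,\phi_n\rangle^2$ with the Sturm--Liouville quadratic form via integration by parts and the spectral expansion, bound $\|f'\|_{L^2}^2$ from above by the form using $q\ge 0$ and $\inf p>0$, and bound it from below using the root and the resulting Poincar\'e/Cauchy--Schwarz estimate $\|f\|_{L^2}\lesssim \|f'\|_{L^2}$. The paper's version is slightly less explicit about the boundary contributions (it writes the form simply as $\int p f'^2+qf^2$ without the $\alpha f(a)^2+\beta f(b)^2$ terms and does not comment on the form-domain issue you flag for Dirichlet data), but the argument is otherwise the same.
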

\begin{proof} One direction follows immediately from $q(x) \geq 0$, the fact that $p(x) > 0$ is uniformly bounded away from 0, integration by parts and completeness of $(\phi_n)$ in $L^2(a,b)$
\begin{align*}  \int_{a}^{b}{ f'(x)^2 dx} &\leq \frac{1}{\min_{a \leq x \leq b}{p(x)}} \int_{a}^{b}{p(x) f'(x)^2 + q(x) f(x)^2 dx}\\ 
&=  \frac{1}{\min_{a \leq x \leq b}{p(x)}}\left\langle -(p(x)f'(x))' + q(x) f(x), f(x) \right\rangle\\
& =  \frac{1}{\min_{a \leq x \leq b}{p(x)}}\left\langle \sum_{n =1}^{\infty}{ \lambda_n \left\langle f, \phi_n\right\rangle \phi_n},  \sum_{n =1}^{\infty}{ \left\langle f, \phi_n \right\rangle \phi_n} \right\rangle\\ 
&=   \frac{1}{\min_{a \leq x \leq b}{p(x)}}\sum_{n =1}^{\infty}{ \lambda_n \left| \left\langle f, \phi_n \right\rangle \right|^2}.
\end{align*}
Suppose now that $f(x_0) = 0$. Then we can argue that
\begin{align*}
\int_{a}^{b}{q(x) f(x)^2 dx} &\leq \max_{a \leq x \leq b}{q(x)} \int_{a}^{b}{ \left( \int_{x_0}^{x}f'(y)dy\right)^2 dx} \\
&\leq  \max_{a \leq x \leq b}{q(x)}  (b-a) \int_{a}^{b}{ \left( \int_{\min\left\{ x_0, x\right\}}^{\max\left\{x_0, x\right\}}f'(y)^2 dy\right)   dx} \\
&\leq  \frac{\max_{a \leq x \leq b}{q(x)}}{\min_{a \leq x \leq b}{ p(x)}}  (b-a)^2 \int_{a}^{b}{ p(x) f'(x)^2 dx}.
\end{align*}
This implies the desired equivalence since
\begin{align*}
\int_{a}^{b}{p(x) f'(x)^2 + q(x) f(x)^2 dx} &\leq \left(1 + \frac{\max_{a \leq x \leq b}{q(x)}}{\min_{a \leq x \leq b}{ p(x)}}  (b-a)^2\right)\int_{a}^{b}{p(x) f'(x)^2dx}\\
&\leq  \left(1 + \frac{\max_{a \leq x \leq b}{q(x)}}{\min_{a \leq x \leq b}{ p(x)}}  (b-a)^2\right) \max_{a \leq x \leq b}{p(x)} \int_{a}^{b}{f'(x)^2 dx}.
\end{align*}
\end{proof}

\begin{proof}[Proof of Theorem 2] We now present the proof of Theorem 2. A proof of Theorem 1 is included as a special case by setting $\lambda_n = n^2$ and $\phi_n(x) = \sin{(nx)}$ (in that case, many of the steps can be considerably simplified). We start by noting that we can apply the heat equation
$$ u_t =  (p(x) y'(x))' - q(x) y(x)$$
for an arbitrary short amount of time; this increases the regularity of the function and is a continuous operation for the function that we consider since it is diagonalized by Sturm-Liouville eigenfunctions. We can thus assume that $f \in C^3[0, \pi]$.
  Let $f \in C^3[0,\pi]$ and assume $f$ has $d-1$ roots inside the interval $(0,\pi)$. Roots that are not necessarily sign changes, i.e. double roots or roots of higher order, are unstable under small perturbations. Since there are only finitely many, we can perform a slight mollification to remove them (indeed, this actually paves the way to a slightly stronger result where instead of counting roots we count roots that are also sign changes and we count them without multiplicity). This can be done in a way that changes $\| \nabla f\|_{L^2}/\|f\|_{L^2}$ by an arbitrarily small amount. We are interested in obtaining
$$ \mbox{lower bounds on} \qquad  \sum_{n = 1}^{d}{ \left| \left\langle f,  \phi_n \right\rangle \right|}$$
and this sum will undoubtedly be affected by the perturbation, however, we can again ensure that the quantity is perturbed an arbitrarily small amount: if we are interested in proving a lower bound $\delta$, as claimed in the statement, we can ensure that the perturbation changes the quantity by at most a factor of $\delta/100$ and still conclude the result with constant $99 \delta/100$. We can furthermore assume that $f'$ has a nonvanishing derivative in the places where it changes sign. A slightly different approach to this perturbation is given at the end of the paper. We shall abuse notation by using $f$ to denote this mollified function; in particular, it suffices to prove the statements for functions $f$ that have $d-1$ sign changes, no additional roots and a nonvanishing derivative in its roots.
We decompose the function $f$ as
\begin{align*}
 f = \sum_{n=1}^{\infty}{ \left\langle f, \phi_n \right\rangle \phi_n}.
\end{align*}
It is classical (see e.g. Titchmarsh \cite{titchmarsh}) that Sturm-Liouville expansion inherit essentially the same kind of regularity properties as Fourier series. In particular, for $f \in C^3$ the expansion converges uniformly for the function and its derivatives on any compact subset inside $(a,b)$. Various weaker conditions, $f \in C^{2+\varepsilon}$ or some bound on total variation would be enough.
 In particular, there exists $N_0 \in \mathbb{N}$ such that for all $N \geq N_0$
$$ g = \sum_{n=1}^{N}{ \left\langle f, \phi_n \right\rangle \phi_n} \qquad \mbox{has exactly}~d-1~\mbox{roots}.$$
After possibly further increasing $N$, we may assume that $\|g\|_{L^2} \geq \|f\|_{L^2}/2$. We now continue working with $g$ and decompose it into three different terms
$$ g = \sum_{n =1}^{N}{ \left\langle f, \phi_n \right\rangle \phi_n} = \sum_{n \leq d}^{}{ \left\langle f, \phi_n \right\rangle \phi_n}  + \sum_{d+1 \leq n \leq M}^{}{ \left\langle f, \phi_n \right\rangle \phi_n} +\sum_{M+1 \leq n \leq N}^{}{ \left\langle f, \phi_n \right\rangle \phi_n},$$
where $M$ will be chosen so large that the first two terms contains at least half of the $L^2-$mass of $g$. The next step is to show that the following choice of $M$ indeed has the desired property
$$ M = \frac{c \| \nabla f\|_{L^2}}{\|f\|_{L^2}}.$$
Here, the constant $c$ will depend on the constants in the Lemma showing equivalence of the classical and induced Sobolev space which themselves depend only on $p(x), q(x)$ and $w(x)$ in the Sturm-Liouville problem.
We use the Lemma, the symbol $\sim$ hiding implicit constants depending on the problem, to conclude 
\begin{align*}
\| \nabla f \|_{L^2}^2 &\sim \sum_{n=1}^{\infty}{\lambda_n \left| \left\langle f, \phi_n \right\rangle \right|^2} \geq  \sum_{n \geq M}^{}{\lambda_n \left| \left\langle f, \phi_n \right\rangle \right|^2} 
\geq   \lambda_{M}  \sum_{n \geq M}^{}{\left| \left\langle f, \phi_n \right\rangle \right|^2}.
\end{align*}
Using the Weyl asymptotic (see, for example, \cite[Theorem 5.25]{teschl})
$$ \lambda_n = \pi^2 \left( \int_{a}^{b}{ \sqrt{\frac{w(t)}{p(t)}} dt} \right)^{-2}n^2 + \mathcal{O}(n),$$
we can conclude that for a suitable choice of $c$ (depending on the implicit constants in the Lemma which in turn depend on the coefficients in the Sturm-Liouville problem) and the definition of $M$
$$ \sum_{n \geq M}^{}{\left| \left\langle g, \phi_n \right\rangle \right|^2}  \leq \sum_{n \geq M}^{}{\left| \left\langle f, \phi_n \right\rangle \right|^2} \leq \frac{\|f\|_{L^2}^2}{100} \leq \frac{\|g\|_{L^2}^2}{25}.$$
We now make use of an explicit monotonicity formula due to Sturm (nicely explained in \cite[Proposition 2.11]{berard}):
for every integer $\ell \geq 1$, the function
$$ g_{\ell} =  \sum_{n \leq d}^{}{ \left\langle g, \phi_n \right\rangle \lambda_n^{-\ell} \phi_n} + \sum_{d+1 \leq n \leq c_2M}^{}{ \left\langle g, \phi_n \right\rangle \lambda_n^{-\ell}  \phi_n} + \sum_{c_2 M + 1\leq n \leq N}^{}{ \left\langle g, \phi_n \right\rangle \lambda_n^{-\ell}  \phi_n}$$
has less roots (counted with multiplicity) in $(a,b)$ than $g$ and therefore at most $d-1$ roots (counted with multiplicity) since $f$, after perturbation, has $d-1$ roots each of which has multiplicity 1. The remainder of the argument will work as follows.
\begin{enumerate}
\item We show that, for a suitable choice of $c_2$ and all $\ell$ sufficiently large, the third sum is much smaller in $L^{\infty}$ than the second sum is in $L^2$.
\item We use the Combinatorial Lemma to conclude that for a suitable value of $\ell$, the second sum is essentially one pure frequency up to small errors in $L^{\infty}$.
\item Any pure frequency that could occur in the second sum has at least $d$ roots, the function itself has only $d-1$ roots. This cannot be changed by the third term which is too small.
\item This is therefore corrected by the first term; this term can thus not be arbitrarily small.
\end{enumerate}

 We start by estimating the third term via
\begin{align*}
  \left\| \sum_{c_2M + 1\leq n \leq N+1}^{}{ \left\langle g, \phi_n \right\rangle \lambda_n^{-\ell}  \phi_n} \right\|_{L^{\infty}}  &\leq  \left(\sup_{n \in \mathbb{N}}{ \|\phi_n\|_{L^{\infty}}}\right) \sum_{c_2M + 1\leq n \leq N+1}^{}{ \left| \left\langle g, \phi_n \right\rangle \right| \lambda_n^{-\ell}  } \\
&\leq  \left(\sup_{n \in \mathbb{N}}{ \|\phi_n\|_{L^{\infty}}}\right) \left( \sum_{ n \geq c_2M + 1}^{}{ \left| \left\langle g, \phi_n \right\rangle \right|^2  } \right)^{\frac{1}{2}}  \left( \sum_{ n \geq c_2M + 1}^{}{ \lambda_n^{-2\ell}  } \right)^{\frac{1}{2}}  \\
&\leq c_3\left(\sup_{n \in \mathbb{N}}{ \|\phi_n\|_{L^{\infty}}}\right) \frac{\|g\|_{L^2}}{ (\lambda_{c_2M})^{ \ell - \frac{1}{2}}},
\end{align*}
where the constant $c_3$ depends on the growth of the eigenvalues of $\lambda_n$ and the speed with which they approximate the Weyl-asymptotic. 
We now use the Combinatorial Lemma for the sum
$$  \sum_{d+1 \leq n \leq c_2M}^{}{ \left\langle g, \phi_n \right\rangle \lambda_n^{-\ell}  \phi_n}.$$
This requires us to understand the quantity $b = \min \lambda_{n+1}/\lambda_n$. We have no external information about this quantity and its size will enter as a factor into the function $g(\kappa)$ in the statement of Theorem 2. However, we have a pretty good understanding what happens generically and expect
$$ b(c_2 M) =  \min_{d+1 \leq n \leq c_2 M}{ \frac{\lambda_{n+1}}{\lambda_n}} \sim \frac{(c_2 M + 1)^2}{c_2^2 M^2} \sim 1 + \frac{1}{c_2 M}.$$
This, in particular, is true for the trigonometric functions; we assume that it remains valid and proceed with that assumption. In a setting where that assumption is incorrect, one has to introduce 
an explicit functon $b(\cdot )$ and use it in subsequent computations. We note that $\log{b(\cdot )}$ is always well-defined because eigenvalues are simple.
The Combinatorial Lemma implies the existence of
$$  c_4 M^2 \log{\left( \frac{M}{\varepsilon^2} \right)} \leq \ell \leq  2c_4 M^2 \log{\left( \frac{M}{\varepsilon^2} \right)},$$
where the constant $c_4$ depends only on $c_2$ and the validity of the Weyl scaling for $b(\cdot)$, for which
$$  \sum_{d+1 \leq n \leq c_2M}^{}{ \left\langle g, \phi_n \right\rangle \lambda_n^{-\ell}  \phi_n}  \qquad \mbox{behaves like a single frequency.}$$
We fix this value $\ell$ henceforth.
More precisely, we can deduce that
$$  \sum_{d+1 \leq n \leq c_2M}^{}{ \left| \left\langle g, \phi_n \right\rangle \lambda_n^{-\ell}  \right| }   \leq (1+\varepsilon) \left| \left\langle g, \phi_k \right\rangle \lambda_k^{-\ell}  \right| \leq \varepsilon $$
for some $d+1 \leq k \leq c_2M$. An application of the triangle inequality shows that
$$ \left\|   \left\langle g, \phi_k \right\rangle \lambda_k^{-\ell} \phi_k -  \sum_{d+1 \leq n \leq c_2M}^{}{  \left\langle g, \phi_n \right\rangle \lambda_n^{-\ell} \phi_n  }  \right\|_{L^{\infty}} \leq \varepsilon  \left(\max_{n \leq c_2 M}{\|\phi_n\|_{L^{\infty}}}\right)  \left\|   \left\langle g, \phi_k \right\rangle \lambda_k^{-\ell} \phi_k  \right\|_{L^{\infty}}.$$

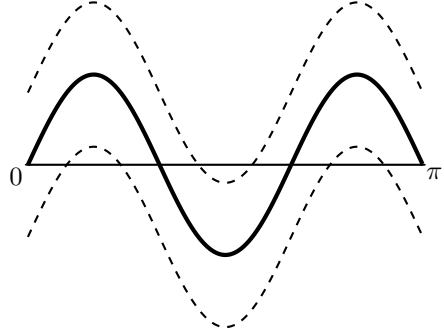
\begin{figure}[h!]
\begin{center}
\begin{tikzpicture}[xscale=3.5*1.5,yscale=0.08*1.5)]
\draw [ultra thick, domain=0:1, samples = 300] plot (\x, {(10*sin(3*pi*\x r)   )  }  );
\draw [dashed, thick, domain=0:1, samples = 300] plot (\x, {(10*sin(3*pi*\x r) - 8  )  }  );
\draw [dashed, thick, domain=0:1, samples = 300] plot (\x, {(10*sin(3*pi*\x r) + 8  )  }  );
\draw [thick, domain=0:1] plot (\x, {0}  );
\filldraw (0,0) ellipse (0.006cm and 0.048cm);
\node at (-0.03,-1.3) {0};
\filldraw (1,0) ellipse (0.006cm and 0.048cm);
\node at (1.03,-1) {$\pi$};
\end{tikzpicture}
\caption{The scale of local maxima/minima determines the natural size for an allowable $L^{\infty}-$bound such that a perturbation cannot have less roots.}
\end{center}
\end{figure}

We will now introduce $h(n)$ as the smallest extremal value (i.e. numerical value of a local maximum or local minimum) assumed by any of the function $\phi_1, \dots, \phi_n$ within one of their nodal domains.
If we are dealing with either Dirichlet or Neumann conditions, then this quantity simplifies to
$$ h(n) = \min_{1 \leq k \leq n} \left\{ \left| \phi_k(x)\right|: \phi_k'(x) = 0\right\}.$$
In the case of trigonometric functions, we simply have $h(n) = 1$. The WKB expansion suggests that we can generally expect $h(n) \sim 1$ with an implicit
constant only depending on the specific coefficients in the Sturm-Liouville problem. We now set $\varepsilon$ as
$$ \varepsilon =  \frac{1}{10} \frac{ h(c_2 M) }{ \max_{n \leq c_2 M}{\|\phi_n\|_{L^{\infty}}}}.$$
For trigonometric functions, this is merely $\varepsilon = 1/10$. This choice of parameters ensures that 
$$   \sum_{d+1 \leq n \leq c_2M}^{}{  \left\langle g, \phi_n \right\rangle \lambda_n^{-\ell} \phi_n  } \quad \mbox{has at least as many roots as} \quad \left\langle g, \phi_k \right\rangle \lambda_k^{-\ell} \phi_k ,$$
which, by the (weak) Sturm Oscillation Theorem, is known to have $k-1 \geq d$ roots. We now want to show that we can let the summation run up to $N+1$. We first establish that this main term is not too small.
Moreover, after possibly increasing $c_2$ by a factor of 2, we have
$$  \left\| \sum_{1 \leq n \leq c_2 M/2}^{}{ \left\langle g, \phi_n \right\rangle \phi_n} \right\|^2_{L^2} \geq \frac{\|g\|^2_{L^2}}{25}.$$
Orthogonality implies that either
$$  \left\| \sum_{1 \leq n \leq d}^{}{ \left\langle g, \phi_n \right\rangle \phi_n} \right\|^2_{L^2} \geq \frac{\|g\|^2_{L^2}}{50} \qquad \mbox{or} \qquad  \left\| \sum_{d+1 \leq n \leq c_2 M/2}^{}{ \left\langle g, \phi_n \right\rangle \phi_n} \right\|^2_{L^2} \geq \frac{\|g\|^2_{L^2}}{50}.$$
The first case is easily seen to imply the desired Theorem (indeed, it would be a much better result). We can thus assume that the second case applies.
Pigeonholing shows that there exists a constant $d+1 \leq k \leq c_2 M/2$ such that
$$   \left| \left\langle g, \phi_k \right\rangle\right|  \gtrsim \frac{\|g\|_{L^2}}{10\sqrt{c_2 M}}.$$
This shows that 
$$   \left\| \sum_{d+1 \leq n \leq c_2 M/2}^{}{ \left\langle g, \phi_n \right\rangle \lambda_n^{-\ell} \phi_n} \right\|_{L^2}  \geq \lambda_{c_2 M/2}^{-\ell}  \frac{\|g\|_{L^2}}{10\sqrt{c_2 M}}.$$
We see that 
\begin{align*}
h(c_2 M)   \left\| \sum_{d+1 \leq n \leq c_2 M/2}^{}{ \left\langle g, \phi_n \right\rangle \lambda_n^{-\ell} \phi_n} \right\|_{L^2} &\gtrsim   \lambda_{c_2 M/2}^{-\ell}  \frac{\|g\|_{L^2}}{10\sqrt{M}} \\
&\gg \left(\sup_{n \in \mathbb{N}}{ \|\phi_n\|_{L^{\infty}}}\right) \frac{\|g\|_{L^2}}{ (\lambda_{c_2M})^{ \ell - \frac{1}{4}}} \\
&=  \left\| \sum_{c_2M + 1\leq n \leq N+1}^{}{ \left\langle g, \phi_n \right\rangle \lambda_n^{-\ell}  \phi_n} \right\|_{L^{\infty}}.
\end{align*}
This shows that the tail is too small to affect the number of roots. The remainder of the argument is simple: if the first $d$ terms are also too small, then we would indeed have $k-1 \geq d$ roots which is a contradiction.
More precisely, if it were the case that
$$ \left\|  \sum_{n \leq d}^{}{ \left\langle g, \phi_n \right\rangle \lambda_n^{-\ell} \phi_n} \right\|_{L^{\infty}} \leq \sum_{n \leq d}{ \left| \left\langle g, \phi_n \right\rangle\right|} \leq \frac{\|g\|_{L^2}}{(\lambda_{c_2M})^{\ell}},$$
then this would imply that $g_{\ell}$ has at least $d$ roots in the interior of the interval. This is a contradiction and thus
$$ \sum_{n \leq d}{ \left| \left\langle g, \phi_n \right\rangle\right|} \geq  \frac{1}{\sup_{n \in \mathbb{N}}{ \|\phi_n\|_{L^{\infty}}}}  \frac{\|g\|_{L^2}}{(\lambda_{c_2M})^{\ell}} \qquad \mbox{as desired.}$$
Should $h(n)$ have some unexpectedly small values, then we increase the value of $\ell$ depending on that, incorporate it into $g(\kappa)$ and proceed in the same way.
\end{proof}

\end{document}